
\documentclass[reqno,a4paper]{amsart}
\usepackage{amssymb}
\usepackage{amsmath}
\usepackage{amsfonts}

\setcounter{MaxMatrixCols}{10}

\topmargin=-1cm
\oddsidemargin=0.3cm
\evensidemargin=0.3cm
\textheight=24cm
\textwidth=16cm
\newtheorem{theorem}{Theorem}[section]
\theoremstyle{plain}

\newtheorem{corollary}{Corollary}[section]

\newtheorem{lemma}{Lemma}[section]

\numberwithin{equation}{section}

\input{tcilatex}

\begin{document}
\title[]{On Better Approximation Order for the Nonlinear $q$-Bernstein
Operator of Maximum Product Kind}
\author{SEZ\.{I}N \c{C}\.{I}T}
\address{Department of Mathematics, Faculty of Science, Gazi University,
Ankara, Turkey}
\email{sezincit@gazi.edu.tr}
\author{OG\"{U}N DO\u{G}RU}
\address{Department of Mathematics, Faculty of Science, Gazi University,
Ankara, Turkey}
\email{ogun.dogru@gazi.edu.tr}
\thanks{}
\urladdr{}
\date{}
\subjclass{ 41A10, 41A25, 41A36}
\keywords{Nonlinear $q$-Bernstein operator of maximum product kind, modulus
of continuity.}
\thanks{}

\begin{abstract}
Nonlinear $q$-Bernstein operator of max-product kind was introduced and its
approximation order was examined, and the order of approximation was found
to be $1/\sqrt{\left[ n\right] _{q}}$ by Duman in \cite{q bernstein}. In
this paper, we found a better order of approximation for this operator.
\end{abstract}

\maketitle

\section{Introduction}

In recent years, the nonlinear Bernstein operator of max-product kind has
been introduced, and some approximation properties have been examined by
Bede et al. \cite{max-prod-berns}, \cite{max-prod-berns-szazs}. About
nonlinear max-product type operators details can be found in \cite%
{max-prod-shepard}, \cite{approx by pseudo}, \cite{max-prod kitap} and \cite%
{max-prod-Szasz}. These type of nonlinear max-product operators are defined
in the semi-ring structure called as the maximum product algebra by
replacing the maximum with the sum as%
\begin{equation*}
B_{n}^{(M)}\left( f\right) \left( x\right) =\frac{\bigvee%
\limits_{k=0}^{n}p_{n,k}\left( x\right) f\left( \frac{k}{n}\right) }{%
\bigvee\limits_{k=0}^{n}p_{n,k}\left( x\right) },
\end{equation*}%
where $n\in 
\mathbb{N}
,$ $f\in C\left[ 0,1\right] ,$ $x\in \left[ 0,1\right] ,$ and $p_{n,k}\left(
x\right) =\binom{n}{k}x^{k}\left( 1-x\right) ^{n-k}.$ The order of
approximation for $B_{n}^{(M)}(f)\left( x\right) $ can be found in \cite%
{max-prod-berns} by means of the modulus of continuity as $\omega \left( f;1/%
\sqrt{n}\right) $. In \cite{better bernstein} we find that the order of
approximation of this operator can be obtained as $1/n^{1-\frac{1}{\alpha }%
}, $ where $\alpha =2,3,...$ . So, we show that the order of approximation
for this operator given in \cite{max-prod-berns} can be improved for big
enough values of $\alpha $.

The classical Bernstein polynomials based on $q$-integers was introduced in 
\cite{Philips q}, \cite{Lupas} as 
\begin{equation*}
B_{n,q}(f)\left( x\right) =\sum_{k=0}^{n}\left[ 
\begin{array}{c}
n \\ 
k%
\end{array}%
\right] _{q}f\left( \frac{\left[ k\right] _{q}}{\left[ n\right] _{q}}\right)
x^{k}\prod\limits_{s=0}^{n-k-1}\left( 1-q^{s}x\right)
\end{equation*}%
where $n\in 
\mathbb{N}
,$ $f\in C\left[ 0,1\right] ,$ $x\in \left[ 0,1\right] $ and $q\in \left( 0,1%
\right] $.

First of all, we recall some definations for the concept of $q$-integer. The
definition of the $q$-integer for any nonnegative integer $n$ is 
\begin{equation*}
\left[ n\right] _{q}=\left\{ 
\begin{array}{ccc}
\frac{1-q^{n}}{1-q} & \text{if} & q\in \left( 0,1\right) \\ 
n & \text{if} & q=1%
\end{array}%
\right. .
\end{equation*}%
The $q$-factorial is defind as 
\begin{equation*}
\left[ n\right] _{q}!=\left\{ 
\begin{array}{ccc}
\left[ n\right] _{q}...\left[ 2\right] _{q}\left[ 1\right] _{q} & \text{if}
& n=1,2,... \\ 
1 & \text{if} & n=0%
\end{array}%
\right.
\end{equation*}%
and the $q$-binomial coefficient is defind as%
\begin{equation*}
\left[ 
\begin{array}{c}
n \\ 
k%
\end{array}%
\right] _{q}=\frac{\left[ n\right] _{q}!}{\left[ k\right] _{q}!\left[ n-k%
\right] _{q}!}.
\end{equation*}%
In \cite{q bernstein}, Duman introduced nonlinear $q$-Bernstein operator as%
\begin{equation}
B_{n,q}^{(M)}(f)\left( x\right) =\frac{\bigvee\limits_{k=0}^{n}p_{n,k}\left(
x;q\right) f\left( \frac{\left[ k\right] _{q}}{\left[ n\right] _{q}}\right) 
}{\bigvee\limits_{k=0}^{n}p_{n,k}\left( x;q\right) },  \label{q-Berns op}
\end{equation}%
where $n\in 
\mathbb{N}
,$ $f\in C\left[ 0,1\right] ,$ $x\in \left[ 0,1\right] $, $q\in \left(
0,1\right) $ and $p_{n,k}\left( x;q\right) =\left[ 
\begin{array}{c}
n \\ 
k%
\end{array}%
\right] _{q}x^{k}\prod\limits_{s=0}^{n-k-1}\left( 1-q^{s}x\right) .$ And
then he obtained the order of approximation for this operator by means of $1/%
\sqrt{\left[ n\right] _{q}}.$

In this paper, our aim is to improve this order of approximation for the
nonlinear $q$-Bernstein operator similarly as \cite{better bernstein}.

\section{Construction of the Operators}

In this section, let's recall the concepts in \cite{q bernstein}. Over the
set of $%
\mathbb{R}
_{+}$, we consider the operations $\bigvee $\ (maximum) and $\cdot $
(product). Then $\left( 
\mathbb{R}
_{+},\vee ,\cdot \right) $ has a semirings structure and it is called as
maximum product algebra (see, for instance, \cite{max-prod-berns-szazs}, 
\cite{approx by pseudo}). Let us consider%
\begin{equation*}
C_{+}\left[ 0,1\right] =\left\{ f:\left[ 0,1\right] \rightarrow \mathbb{R}%
_{+}:f\text{ continuous on }\left[ 0,1\right] \right\} .
\end{equation*}%
Since $B_{n,q}^{(M)}(f)\left( 0\right) =f\left( 0\right) $ for all $n,$ in
this part, we will consider $x>0$ in the notations, proofs and statements of
the all approximation result. $B_{n,q}^{(M)}(f)\left( x\right) $ is a
positive operator because of $f\in C_{+}\left[ 0,1\right] ,$ and $%
p_{n,k}\left( x;q\right) >0$ for all $x\in \left[ 0,1\right] .$ But, it is
not linear over $C_{+}\left[ 0,1\right] .$ Now, let us consider $f,g\in C_{+}%
\left[ 0,1\right] ,$ by the definition we see that, 
\begin{equation}
f\leq g\Longrightarrow B_{n,q}^{(M)}(f)\left( x\right) \leq
B_{n,q}^{(M)}(g)\left( x\right) .  \label{intro 1}
\end{equation}%
So, $B_{n,q}^{(M)}(f)\left( x\right) $ is increasing with respect to $f.$ In
adition, for any $f,g\in C_{+}\left[ 0,1\right] $ we get 
\begin{equation}
B_{n,q}^{(M)}(f+g)\left( x\right) \leq B_{n,q}^{(M)}(f)\left( x\right)
+B_{n,q}^{(M)}(g)\left( x\right) .  \label{intro 2}
\end{equation}%
Let $\omega \left( f,\delta \right) $, $\delta >0,$ denote the classical
modulus of continuity of $f\in C_{+}\left[ 0,1\right] $ defined by 
\begin{equation*}
\omega \left( f,\delta \right) =\max\limits_{x,y\in \left[ 0,1\right]
}\left\vert f\left( x\right) -f\left( y\right) \right\vert .
\end{equation*}%
Using (\ref{intro 1}), (\ref{intro 2}), and also applying Corollary $3$ in 
\cite{max-prod-berns-szazs} or Corollary $2.3$ in \cite{max-prod-berns}, we
have the following:

\begin{corollary}
\label{Corollary 1} \cite{q bernstein} For all $f\in C_{+}\left[ 0,1\right]
, $ $n\in \mathbb{N},$ $x\in \left[ 0,1\right] $ and $q\in \left( 0,1\right) 
$ we have 
\begin{equation*}
\left\vert B_{n,q}^{(M)}(f)\left( x\right) -f(x)\right\vert \leq 2\omega
\left( f,\delta _{n}\left( x;q\right) \right) ,
\end{equation*}%
where 
\begin{equation*}
\delta _{n}\left( x;q\right) :=B_{n,q}^{(M)}(\varphi _{x})\left( x\right) 
\text{ with }\varphi _{x}\left( t\right) =\left\vert t-x\right\vert .
\end{equation*}
\end{corollary}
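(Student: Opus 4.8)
The plan is to deduce the estimate from four structural properties of $B_{n,q}^{(M)}$: reproduction of constants, positive homogeneity, monotonicity in $f$, and subadditivity. Monotonicity and subadditivity are precisely (\ref{intro 1}) and (\ref{intro 2}). The remaining two follow immediately from the quotient form (\ref{q-Berns op}): the common denominator $\bigvee_{k=0}^{n}p_{n,k}(x;q)$ does not vanish for $x\in(0,1]$, so $B_{n,q}^{(M)}(c)(x)=c$ for every constant $c\geq 0$, while $B_{n,q}^{(M)}(\lambda f)(x)=\lambda\,B_{n,q}^{(M)}(f)(x)$ for every $\lambda\geq 0$ because the maximum commutes with multiplication by a nonnegative scalar. (Equivalently, one simply invokes Corollary $2.3$ of \cite{max-prod-berns} / Corollary $3$ of \cite{max-prod-berns-szazs}, which is exactly the abstract statement for operators enjoying these properties.)

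Next I would convert the classical modulus inequality into a pointwise inequality between elements of $C_{+}[0,1]$. Fix $x\in(0,1]$ and $\delta>0$. For every $t\in[0,1]$ one has $|f(t)-f(x)|\leq\omega(f,|t-x|)\leq\bigl(1+\delta^{-1}|t-x|\bigr)\omega(f,\delta)$, hence, writing $e_{0}$ for the function identically equal to $1$ and recalling $\varphi_{x}(t)=|t-x|$,
\[
f\;\leq\;\bigl(f(x)+\omega(f,\delta)\bigr)\,e_{0}+\frac{\omega(f,\delta)}{\delta}\,\varphi_{x},
\qquad
f(x)\,e_{0}\;\leq\;f+\omega(f,\delta)\,e_{0}+\frac{\omega(f,\delta)}{\delta}\,\varphi_{x},
\]
where every summand on the right is a nonnegative continuous function, so the whole computation stays inside $C_{+}[0,1]$. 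Applying $B_{n,q}^{(M)}$, using monotonicity first and then subadditivity together with positive homogeneity and reproduction of constants, both inequalities reduce to
\[
\bigl|B_{n,q}^{(M)}(f)(x)-f(x)\bigr|\;\leq\;\Bigl(1+\frac{1}{\delta}\,B_{n,q}^{(M)}(\varphi_{x})(x)\Bigr)\,\omega(f,\delta).
\]

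Finally I would optimise over $\delta$. When $\delta_{n}(x;q)=B_{n,q}^{(M)}(\varphi_{x})(x)>0$, the choice $\delta=\delta_{n}(x;q)$ turns the bracket into $1+1=2$ and gives the claim. The only case needing a separate word is $\delta_{n}(x;q)=0$ (which happens, for instance, at $x=0$, where indeed $B_{n,q}^{(M)}(f)(0)=f(0)$): there the displayed bound reads $|B_{n,q}^{(M)}(f)(x)-f(x)|\leq\omega(f,\delta)$ for every $\delta>0$, and letting $\delta\to 0^{+}$ and using the uniform continuity of $f$ forces $B_{n,q}^{(M)}(f)(x)=f(x)$, so the inequality holds with both sides zero. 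I do not expect a genuine obstacle here: the only care needed is that the non-linearity of $B_{n,q}^{(M)}$ confines homogeneity to nonnegative scalars and subadditivity to finite sums --- which is exactly why the decomposition above uses only nonnegative summands --- and that the optimal choice of $\delta$ be handled separately at the points where $\delta_{n}(x;q)$ vanishes.
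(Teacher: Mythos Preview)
Your proposal is correct and follows the paper's own route: the paper does not give a standalone proof of this corollary but simply records that (\ref{intro 1}) and (\ref{intro 2}) place $B_{n,q}^{(M)}$ in the abstract framework of Corollary~3 in \cite{max-prod-berns-szazs} (equivalently Corollary~2.3 in \cite{max-prod-berns}), and you have spelled out precisely that argument---monotonicity, subadditivity, positive homogeneity, reproduction of constants, the $(1+\delta^{-1}|t-x|)\omega(f,\delta)$ bound, and the choice $\delta=\delta_n(x;q)$. Your added care with the degenerate case $\delta_n(x;q)=0$ and with keeping all summands in $C_{+}[0,1]$ is appropriate and goes slightly beyond what the paper makes explicit.
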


\section{Auxiliary Results}

Let us define the following expression similar as in \cite{q bernstein}.

For each $k,j\in \left\{ 0,1,...,n\right\} $ and $x\in \left[ \frac{\left[ j%
\right] _{q}}{\left[ n+1\right] _{q}},\frac{\left[ j+1\right] _{q}}{\left[
n+1\right] _{q}}\right] $,%
\begin{equation*}
M_{k,n,j}\left( x;q\right) =\frac{p_{n,k}\left( x;q\right) \left\vert \frac{%
\left[ k\right] _{q}}{\left[ n\right] _{q}}-x\right\vert }{p_{n,j}\left(
x;q\right) }\text{ and }m_{k,n,j}\left( x;q\right) =\frac{p_{n,k}\left(
x;q\right) }{p_{n,j}\left( x;q\right) }.
\end{equation*}%
If $k\geq j+1$ then we have%
\begin{equation*}
M_{k,n,j}\left( x;q\right) =\frac{p_{n,k}\left( x;q\right) \left( \frac{%
\left[ k\right] _{q}}{\left[ n\right] _{q}}-x\right) }{p_{n,j}\left(
x;q\right) },
\end{equation*}%
and if $k\leq j-1$ then we have%
\begin{equation*}
M_{k,n,j}\left( x;q\right) =\frac{p_{n,k}\left( x;q\right) \left( x-\frac{%
\left[ k\right] _{q}}{\left[ n\right] _{q}}\right) }{p_{n,j}\left(
x;q\right) }.
\end{equation*}%
Also for $k,j\in \left\{ 0,1,...,n\right\} $ and $x\in \left[ \frac{\left[ j%
\right] _{q}}{\left[ n+1\right] _{q}},\frac{\left[ j+1\right] _{q}}{\left[
n+1\right] _{q}}\right] $

if $k\geq j+2$ then we have%
\begin{equation*}
\overline{M}_{k,n,j}(x;q)=\frac{p_{n,k}\left( x;q\right) \left( \frac{\left[
k\right] _{q}}{\left[ n+1\right] _{q}}-x\right) }{p_{n,j}\left( x;q\right) },
\end{equation*}%
if $k\leq j-2$ then we have%
\begin{equation*}
\underline{M}_{k,n,j}(x;q)=\frac{p_{n,k}\left( x;q\right) \left( x-\frac{%
\left[ k\right] _{q}}{\left[ n+1\right] _{q}}\right) }{p_{n,j}\left(
x;q\right) }.
\end{equation*}%
At this point, let us recall the following two lemmas:

\begin{lemma}
\cite{q bernstein} \label{Lemma 2} Let $q\in \left( 0,1\right) $ and $x\in %
\left[ \frac{\left[ j\right] _{q}}{\left[ n+1\right] _{q}},\frac{\left[ j+1%
\right] _{q}}{\left[ n+1\right] _{q}}\right] .$ Then we have

$(i)$ for all $k,j\in \left\{ 0,1,...,n\right\} $ with $k\geq j+2$, 
\begin{equation*}
\overline{M}_{k,n,j}\left( x;q\right) \leq M_{k,n,j}\left( x;q\right) \leq
\left( 1+\frac{2}{q^{n+1}}\right) \overline{M}_{k,n,j}\left( x;q\right) ,
\end{equation*}

$(ii)$ for all $k,j\in \left\{ 0,1,...,n\right\} $ with $k\leq j-2$, 
\begin{equation*}
M_{k,n,j}\left( x,q\right) \leq \text{$\underline{M}$}_{k,n,j}\left(
x;q\right) \leq \left( 1+\frac{2}{q^{n}}\right) M_{k,n,j}\left( x;q\right) .
\end{equation*}
\end{lemma}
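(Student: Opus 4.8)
The plan is to reduce both parts to inequalities between the ``distance factors'' occurring in $M_{k,n,j}$, $\overline{M}_{k,n,j}$ and $\underline{M}_{k,n,j}$, because the positive number $p_{n,k}(x;q)/p_{n,j}(x;q)$ is common to all three and cancels in every ratio. Thus $(i)$ becomes
\[
1\le \frac{\frac{[k]_q}{[n]_q}-x}{\frac{[k]_q}{[n+1]_q}-x}\le 1+\frac{2}{q^{n+1}},
\]
while $(ii)$ becomes
\[
1\le \frac{x-\frac{[k]_q}{[n+1]_q}}{x-\frac{[k]_q}{[n]_q}}\le 1+\frac{2}{q^{n}},
\]
once the absolute value $\left\vert \frac{[k]_q}{[n]_q}-x\right\vert $ has been resolved.

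First I would settle the signs. In $(i)$, $k\ge j+2$ together with $x\le \frac{[j+1]_q}{[n+1]_q}<\frac{[k]_q}{[n+1]_q}\le \frac{[k]_q}{[n]_q}$ (using $[n]_q\le [n+1]_q$ and monotonicity of $m\mapsto [m]_q$) makes all three factors positive and $\left\vert \frac{[k]_q}{[n]_q}-x\right\vert =\frac{[k]_q}{[n]_q}-x$, so the left inequality $\overline{M}_{k,n,j}\le M_{k,n,j}$ is immediate from $\frac{[k]_q}{[n]_q}\ge \frac{[k]_q}{[n+1]_q}$. In $(ii)$, $k\le j-2$ forces $j\ge 2$, and here the key preliminary claim is $\frac{[k]_q}{[n]_q}\le \frac{[j]_q}{[n+1]_q}\le x$; it suffices to prove $[k]_q[n+1]_q\le [j]_q[n]_q$, and using the identities $[j]_q-[j-2]_q=q^{j-2}(1+q)$, $[n+1]_q-[n]_q=q^{n}$ together with $[j]_q\le [n+1]_q$ this reduces to $q^{n}\le q^{j-2}(1+q)$, which holds since $j\le n$. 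Consequently $\left\vert \frac{[k]_q}{[n]_q}-x\right\vert =x-\frac{[k]_q}{[n]_q}\le x-\frac{[k]_q}{[n+1]_q}$, i.e. $M_{k,n,j}\le \underline{M}_{k,n,j}$.

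For the upper bounds the plan is to write each ratio as $1$ plus a remainder and estimate the remainder. Using $[n+1]_q-[n]_q=q^{n}$,
\[
\frac{\frac{[k]_q}{[n]_q}-x}{\frac{[k]_q}{[n+1]_q}-x}=1+\frac{[k]_q\,q^{n}/\bigl([n]_q[n+1]_q\bigr)}{\frac{[k]_q}{[n+1]_q}-x};
\]
bounding the denominator from below by $\frac{[k]_q-[j+1]_q}{[n+1]_q}\ge \frac{q^{k-1}}{[n+1]_q}$ (via $x\le \frac{[j+1]_q}{[n+1]_q}$ and $[k]_q-[j+1]_q\ge [k]_q-[k-1]_q=q^{k-1}$) shows the remainder is at most $\frac{[k]_q}{[n]_q}q^{\,n-k+1}\le q^{\,n-k+1}\le \frac{2}{q^{n+1}}$. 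The computation for $(ii)$ is parallel: the remainder is $[k]_q q^{n}/\bigl([n]_q[n+1]_q\bigr)$ divided by $x-\frac{[k]_q}{[n]_q}$, and from $x\ge \frac{[j]_q}{[n+1]_q}$ and $[k]_q\le [j-2]_q$ one gets, after the same $q$-integer identities and $q(1-q)\ge 0$, the lower bound $x-\frac{[k]_q}{[n]_q}\ge \frac{[j]_q}{[n+1]_q}-\frac{[j-2]_q}{[n]_q}\ge \frac{q^{j-2}}{[n+1]_q}$; hence the remainder is at most $q^{\,n-j+2}\le \frac{2}{q^{n}}$.

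The main obstacle is nothing conceptual but rather the $q$-integer bookkeeping: establishing the ordering $\frac{[k]_q}{[n]_q}\le x$ in $(ii)$ and producing clean lower bounds, in powers of $q$, for the shifted denominators $\frac{[k]_q}{[n+1]_q}-x$ and $x-\frac{[k]_q}{[n]_q}$. Everything rests on the two identities $[n+1]_q-[n]_q=q^{n}$ and $[m]_q-[m-1]_q=q^{m-1}$ combined with the location $x\in \left[ \frac{[j]_q}{[n+1]_q},\frac{[j+1]_q}{[n+1]_q}\right] $; the constants $2/q^{n+1}$ and $2/q^{n}$ are comfortably non-sharp, being simply what these crude estimates yield.
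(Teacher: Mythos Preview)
The paper does not supply its own proof of this lemma; it is quoted verbatim from \cite{q bernstein} and only used as a tool in the proof of Theorem~\ref{Theorem 6}. So there is nothing in the present paper to compare your argument against.

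That said, your sketch is a correct self-contained verification. After cancelling the common factor $p_{n,k}(x;q)/p_{n,j}(x;q)$, the sign analysis (using $k\ge j+2$ in $(i)$ and the cross-inequality $[k]_q[n+1]_q\le [j]_q[n]_q$ in $(ii)$) gives the left-hand inequalities immediately, and writing each ratio as $1$ plus a remainder and bounding the remainder via $[n+1]_q-[n]_q=q^{n}$, $[m]_q-[m-1]_q=q^{m-1}$, and the location of $x$ in $\bigl[\tfrac{[j]_q}{[n+1]_q},\tfrac{[j+1]_q}{[n+1]_q}\bigr]$ yields $q^{\,n-k+1}$ (resp.\ $q^{\,n-j+2}$), both of which are $<1$ and hence well below $2/q^{n+1}$ (resp.\ $2/q^{n}$). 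The one place to be slightly more careful in a full write-up is the reduction in $(ii)$: the inequality you actually need after the identities is $q^{j-1}[n]_q\ge [j-2]_q\,q^{n}$ (equivalently $q^{j-1}\ge q^{n}$ after using $[j-2]_q\le [n]_q$), not literally ``$[j]_q\le [n+1]_q$'', but this is of course immediate from $j\le n$.
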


\begin{lemma}
\cite{q bernstein} \label{Lemma 3} Let $q\in \left( 0,1\right) .$ Then for
all $k,j\in \left\{ 0,1,...,n\right\} $ and $x\in \left[ \frac{\left[ j%
\right] _{q}}{\left[ n+1\right] _{q}},\frac{\left[ j+1\right] _{q}}{\left[
n+1\right] _{q}}\right] $, we get 
\begin{equation*}
m_{k,n,j}(x;q)\leq 1.
\end{equation*}
\end{lemma}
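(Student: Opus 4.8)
The plan is to prove the equivalent statement that $p_{n,j}(x;q)=\max_{0\le k\le n}p_{n,k}(x;q)$, since $m_{k,n,j}(x;q)\le 1$ says precisely that $p_{n,k}(x;q)\le p_{n,j}(x;q)$. As for any Bernstein‑type basis, the maximizing index is located from consecutive ratios, so the first step is to compute, for $1\le m\le n$,
\[
\frac{p_{n,m}(x;q)}{p_{n,m-1}(x;q)}=\frac{\left[\begin{array}{c}n\\ m\end{array}\right]_{q}}{\left[\begin{array}{c}n\\ m-1\end{array}\right]_{q}}\,x\,\frac{\prod_{s=0}^{n-m-1}(1-q^{s}x)}{\prod_{s=0}^{n-m}(1-q^{s}x)}=\frac{[n-m+1]_{q}}{[m]_{q}}\cdot\frac{x}{1-q^{n-m}x},
\]
using that the ratio of consecutive $q$‑binomial coefficients equals $[n-m+1]_{q}/[m]_{q}$ and that the product $\prod_{s=0}^{n-m-1}(1-q^{s}x)$ appearing in $p_{n,m}$ is the product $\prod_{s=0}^{n-m}(1-q^{s}x)$ appearing in $p_{n,m-1}$ divided by $(1-q^{n-m}x)$. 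Clearing denominators, $p_{n,m}(x;q)\ge p_{n,m-1}(x;q)$ holds iff $x\bigl([n-m+1]_{q}+q^{n-m}[m]_{q}\bigr)\ge[m]_{q}$, and here the clean identity $[n-m+1]_{q}+q^{n-m}[m]_{q}=[n]_{q}+q^{n-m}$ (immediate from $[r]_{q}=(1-q^{r})/(1-q)$) reduces the condition to $x\ge\tau_{m}$, where $\tau_{m}:=[m]_{q}/\bigl([n]_{q}+q^{n-m}\bigr)$.

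The second step is to compare these balance points $\tau_{m}$ with the endpoints of the interval $\bigl[[j]_{q}/[n+1]_{q},\,[j+1]_{q}/[n+1]_{q}\bigr]$. For $m\le j$ one has $[m]_{q}\le[j]_{q}$ and $[n+1]_{q}=[n]_{q}+q^{n}\le[n]_{q}+q^{n-m}$, hence $\tau_{m}\le[j]_{q}/[n+1]_{q}\le x$, so $p_{n,m}(x;q)\ge p_{n,m-1}(x;q)$; chaining this over $m=k+1,\dots,j$ gives $p_{n,k}(x;q)\le p_{n,j}(x;q)$ whenever $k<j$. Dually, for $m\ge j+1$ I would show $\tau_{m}\ge[j+1]_{q}/[n+1]_{q}\ge x$, so that $p_{n,m}(x;q)\le p_{n,m-1}(x;q)$, and chaining over $m=j+1,\dots,k$ gives $p_{n,k}(x;q)\le p_{n,j}(x;q)$ whenever $k>j$. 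Together with the trivial case $k=j$, this yields $m_{k,n,j}(x;q)\le 1$ for all $k$.

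The ratio computation and the telescoping are routine; the step I expect to be the real obstacle is the second endpoint comparison $\tau_{m}\ge[j+1]_{q}/[n+1]_{q}$ for $m\ge j+1$. In the classical limit $q=1$ one has $\tau_{m}=m/(n+1)$, which coincides exactly with the subdivision points, so the comparison is an identity; but for $q\in(0,1)$ the exponent $n-m<n$ makes $q^{n-m}>q^{n}$, so the denominators of $\tau_{m}$ and of $[j+1]_{q}/[n+1]_{q}$ do not line up and one cannot simply bound factor by factor. I would therefore concentrate the effort on a careful manipulation of the $q$‑integers in that inequality, playing the gain $[m]_{q}\ge[j+1]_{q}$ in the numerator against the extra term $q^{n-m}-q^{n}$ in the denominator; once it is secured, the rest of the argument runs automatically.
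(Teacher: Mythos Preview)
The paper does not give its own proof of this lemma; it is simply quoted from Duman's paper \cite{q bernstein} (the same is true of Lemma~\ref{Lemma 5}, which is the equivalent statement $\bigvee_{k}p_{n,k}=p_{n,j}$). So there is no in-house argument to compare your proposal against, and your ratio computation together with the identity $[n-m+1]_q+q^{n-m}[m]_q=[n]_q+q^{n-m}$ is exactly the standard route one would expect any proof to take.

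The obstacle you single out, however, is not just a matter of ``careful manipulation of the $q$-integers'': the inequality $\tau_m\ge[j+1]_q/[n+1]_q$ that you need for $m\ge j+1$ is \emph{false} already at $m=j+1$. Indeed, $[n+1]_q=[n]_q+q^n$ while $\tau_{j+1}=[j+1]_q/([n]_q+q^{n-j-1})$, and for $q\in(0,1)$ and $j\ge 0$ one has $q^{n-j-1}>q^n$, hence
\[
\tau_{j+1}=\frac{[j+1]_q}{[n]_q+q^{n-j-1}}<\frac{[j+1]_q}{[n]_q+q^n}=\frac{[j+1]_q}{[n+1]_q}.
\]
Thus at the right endpoint $x=[j+1]_q/[n+1]_q$ you genuinely get $p_{n,j+1}(x;q)>p_{n,j}(x;q)$, i.e.\ $m_{j+1,n,j}(x;q)>1$. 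A quick numerical check with $q=1/2$, $n=2$, $j=0$, $x=4/7$ gives $p_{2,0}=15/49$, $p_{2,1}=18/49$, so $m_{1,2,0}=6/5>1$. In other words, the difficulty you flagged is not a gap in your method but a defect in the statement as written (closed interval, strict $q<1$); the crossover point between $p_{n,j}$ and $p_{n,j+1}$ is $\tau_{j+1}$, which lies strictly to the \emph{left} of $[j+1]_q/[n+1]_q$. Any honest argument must either restrict to the intervals $[\tau_j,\tau_{j+1}]$ determined by the actual crossover points, or settle for a bound of the form $m_{k,n,j}(x;q)\le C_q$ with $C_q>1$; this is precisely how the compensating factors like $1+2/q^{n+1}$ in Lemma~\ref{Lemma 2} and Theorem~\ref{Theorem 6} enter the analysis.
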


\begin{lemma}
\label{Lemma 4} Let $q\in \left( 0,1\right) $ moreover $q=\left(
q_{n}\right) ,$ $n\in 
\mathbb{N}
$ and $\lim_{n\rightarrow \infty }q_{n}=1$, $j\in \left\{ 0,1,...,n\right\} $
and $x\in \left[ \frac{\left[ j\right] _{q}}{\left[ n+1\right] _{q}},\frac{%
\left[ j+1\right] _{q}}{\left[ n+1\right] _{q}}\right] $ and $\alpha
=2,3,... $ . Then we have

$(i)$ if $k\in \left\{ j+2,j+3,...,n-1\right\} $ and $\left[ k+1\right]
_{q}-\left( q^{k}\left[ k+1\right] _{q}\right) ^{\frac{1}{\alpha }}\geq %
\left[ j+1\right] _{q}$ then%
\begin{equation*}
\overline{M}_{k,n,j}(x;q)\geq \overline{M}_{k+1,n,j}(x;q),
\end{equation*}%
$(ii)$ if $k\in \left\{ 1,2,...,j-2\right\} $ and $\left[ k\right]
_{q}+\left( q^{k-1}\left[ k\right] _{q}\right) ^{\frac{1}{\alpha }}\leq %
\left[ j\right] _{q}$ then%
\begin{equation*}
\underline{M}_{k,n,j}(x;q)\geq \underline{M}_{k-1,n,j}(x;q).
\end{equation*}
\end{lemma}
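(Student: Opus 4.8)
The plan is to handle both parts by the same device: form the quotient of the two quantities to be compared, reduce it by monotonicity in $x$ to its value at one endpoint of the interval, and then verify the resulting inequality between $q$-integers from the stated hypothesis.

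For $(i)$, observe first that on the given interval $[n+1]_{q}x\le [j+1]_{q}\le [k-1]_{q}<[k]_{q}<[k+1]_{q}$, so $\frac{[k]_{q}}{[n+1]_{q}}-x$ and $\frac{[k+1]_{q}}{[n+1]_{q}}-x$ are positive and the assertion is equivalent to $\overline{M}_{k+1,n,j}(x;q)/\overline{M}_{k,n,j}(x;q)\le 1$. From the definition of $p_{n,k}(x;q)$ — in particular the ratio $\frac{[n-k]_{q}}{[k+1]_{q}}$ of consecutive $q$-binomial coefficients and the cancellation among the factors $\prod_{s=0}^{n-k-1}(1-q^{s}x)$ — one gets
\[
\frac{p_{n,k+1}(x;q)}{p_{n,k}(x;q)}=\frac{[n-k]_{q}}{[k+1]_{q}}\cdot\frac{x}{1-q^{n-k-1}x},
\]
so that, after multiplying the remaining factor above and below by $[n+1]_{q}$, the claim becomes
\[
[n-k]_{q}\,x\,\big([k+1]_{q}-[n+1]_{q}x\big)\le [k+1]_{q}\big(1-q^{n-k-1}x\big)\big([k]_{q}-[n+1]_{q}x\big).
\]
Both $\frac{x}{1-q^{n-k-1}x}$ and $\frac{[k+1]_{q}-[n+1]_{q}x}{[k]_{q}-[n+1]_{q}x}=1+\frac{q^{k}}{[k]_{q}-[n+1]_{q}x}$ are increasing in $x$, hence so is the quotient $\overline{M}_{k+1,n,j}/\overline{M}_{k,n,j}$; therefore it suffices to check the inequality at the right endpoint $x=\frac{[j+1]_{q}}{[n+1]_{q}}$, that is,
\[
[n-k]_{q}\,[j+1]_{q}\big([k+1]_{q}-[j+1]_{q}\big)\le [k+1]_{q}\big([n+1]_{q}-q^{n-k-1}[j+1]_{q}\big)\big([k]_{q}-[j+1]_{q}\big).
\]

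For the remaining step I would use the $q$-integer identities $[n+1]_{q}=[n-k]_{q}+q^{n-k}[k+1]_{q}$ and $[k]_{q}=[k+1]_{q}-q^{k}$. The first turns the middle factor into $[n-k]_{q}+q^{n-k-1}\big(q[k+1]_{q}-[j+1]_{q}\big)$, and the second gives $[k+1]_{q}\big([k]_{q}-[j+1]_{q}\big)-[j+1]_{q}\big([k+1]_{q}-[j+1]_{q}\big)=\big([k+1]_{q}-[j+1]_{q}\big)^{2}-q^{k}[k+1]_{q}$; substituting, the inequality collapses to
\[
[n-k]_{q}\Big(q^{k}[k+1]_{q}-\big([k+1]_{q}-[j+1]_{q}\big)^{2}\Big)\le q^{n-k-1}[k+1]_{q}\big([k]_{q}-[j+1]_{q}\big)\big(q[k+1]_{q}-[j+1]_{q}\big).
\]
Now the hypothesis $[j+1]_{q}\le [k+1]_{q}-\big(q^{k}[k+1]_{q}\big)^{1/\alpha}$ enters: it gives $q[k+1]_{q}-[j+1]_{q}\ge \big(q^{k}[k+1]_{q}\big)^{1/\alpha}-(1-q^{k+1})$, which (together with $q_{n}\to1$) makes the right-hand side nonnegative, the other two factors being positive because $k\ge j+2$; and it gives $\big([k+1]_{q}-[j+1]_{q}\big)^{2}\ge\big(q^{k}[k+1]_{q}\big)^{2/\alpha}$, so the left-hand side is $\le 0$ as soon as $\big(q^{k}[k+1]_{q}\big)^{2/\alpha}\ge q^{k}[k+1]_{q}$ — which is precisely where the condition $\alpha\ge 2$ on the exponent enters. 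I expect this balancing to be the main obstacle: one cannot afford to discard the factor $[n-k]_{q}$, so one genuinely needs the quantity $q^{k}[k+1]_{q}-\big([k+1]_{q}-[j+1]_{q}\big)^{2}$ to have the favorable sign rather than merely be small, and keeping all the auxiliary quantities in the range that makes this work is where $q_{n}\to1$ is used.

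Part $(ii)$ is the mirror image under $k\mapsto k-1$. On the interval $x-\frac{[k]_{q}}{[n+1]_{q}}$ and $x-\frac{[k-1]_{q}}{[n+1]_{q}}$ are positive (since $[n+1]_{q}x\ge[j]_{q}\ge[k+2]_{q}>[k]_{q}$), one has $p_{n,k-1}(x;q)/p_{n,k}(x;q)=\frac{[k]_{q}}{[n-k+1]_{q}}\cdot\frac{1-q^{n-k}x}{x}$, and both this factor and $\frac{[n+1]_{q}x-[k-1]_{q}}{[n+1]_{q}x-[k]_{q}}$ are decreasing in $x$, so $\underline{M}_{k-1,n,j}/\underline{M}_{k,n,j}$ is decreasing and it suffices to evaluate at the left endpoint $x=\frac{[j]_{q}}{[n+1]_{q}}$. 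Using $[n+1]_{q}=[n-k+1]_{q}+q^{n-k+1}[k]_{q}$, $[k]_{q}=[k-1]_{q}+q^{k-1}$ and the hypothesis $[j]_{q}\ge [k]_{q}+\big(q^{k-1}[k]_{q}\big)^{1/\alpha}$, the argument runs exactly as in $(i)$.
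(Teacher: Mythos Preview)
Your reduction is cleaner and more self-contained than the paper's: rather than quoting Duman's lower bound for $\overline{M}_{k,n,j}/\overline{M}_{k+1,n,j}$ and then arguing, you compute the ratio directly, use monotonicity in $x$ to push to the right endpoint, and simplify. Both routes collapse to the same core requirement, namely $([k+1]_q-[j+1]_q)^2\ge q^{k}[k+1]_q$ (this is exactly what makes the paper's quoted ratio $\frac{[k+1]_q}{[j+1]_q}\frac{[k]_q-[j+1]_q}{[k+1]_q-[j+1]_q}$ at least $1$, and it is what forces your left-hand side $\le0$).

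The genuine gap is your last step. You argue that $[k+1]_q-[j+1]_q\ge(q^{k}[k+1]_q)^{1/\alpha}$ yields $([k+1]_q-[j+1]_q)^2\ge(q^{k}[k+1]_q)^{2/\alpha}\ge q^{k}[k+1]_q$, ``which is precisely where the condition $\alpha\ge2$ on the exponent enters.'' But the second inequality points the wrong way: for $\alpha>2$ one has $2/\alpha<1$, so $(q^{k}[k+1]_q)^{2/\alpha}\ge q^{k}[k+1]_q$ only when $q^{k}[k+1]_q\le1$, whereas for $q$ near $1$ and $k\ge j+2\ge2$ one has $q^{k}[k+1]_q\approx k+1\ge3$. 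Thus squaring the $\alpha$-hypothesis does \emph{not} give the square bound. Concretely, for $q$ near $1$, $j=5$, $k=7$, $\alpha=4$: the hypothesis $2\ge8^{1/4}$ holds, yet $2^{2}=4<8$, and your displayed inequality at the right endpoint then reads $(n-7)\cdot4\le 8\cdot1\cdot2$, which fails once $n>11$. So the sign argument ``LHS $\le0$, RHS $\ge0$'' breaks down, and you cannot discard the factor $[n-k]_q$ as you anticipated. The paper, for its part, proceeds by quoting Duman's bound and then running an induction on $\alpha$ together with the observation (from $q_n\to1$) that $[k+1]_q-[j+1]_q\ge1$; you should note that its inductive step deduces the $\alpha$-hypothesis from the $(\alpha-1)$-hypothesis rather than the reverse, so the device it uses at this juncture is different from yours, but neither bridges the gap between $([k+1]_q-[j+1]_q)^\alpha\ge q^{k}[k+1]_q$ and $([k+1]_q-[j+1]_q)^2\ge q^{k}[k+1]_q$ for $\alpha>2$.
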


\begin{proof}
$(i)$ From the case $(i)$ of Lemma $4$ in \cite{q bernstein}, we can write%
\begin{equation*}
\frac{\overline{M}_{k,n,j}(x;q)}{\overline{M}_{k+1,n,j}(x;q)}\geq \frac{%
\left[ k+1\right] _{q}}{\left[ j+1\right] _{q}}\frac{\left[ k\right] _{q}-%
\left[ j+1\right] _{q}}{\left[ k+1\right] _{q}-\left[ j+1\right] _{q}}.
\end{equation*}

After this point we will use a different proof tecnique from \cite{q
bernstein}.

By the induction method, let's show that, the following inequality%
\begin{equation}
\frac{\left[ k+1\right] _{q}}{\left[ j+1\right] _{q}}\frac{\left[ k\right]
_{q}-\left[ j+1\right] _{q}}{\left[ k+1\right] _{q}-\left[ j+1\right] _{q}}%
\geq 1  \label{Lemma.4i}
\end{equation}%
holds for $k\geq j+2$ and $\left[ k+1\right] _{q}-\left( q^{k}\left[ k+1%
\right] _{q}\right) ^{\frac{1}{\alpha }}\geq \left[ j+1\right] _{q}.$

For $\alpha =2$, this inequality is $\left[ k+1\right] _{q}-\sqrt{q^{k}\left[
k+1\right] _{q}}^{\frac{1}{\alpha }}\geq \left[ j+1\right] _{q},$ which
becomes the demonstrated case $(i)$ of Lemma $4$ in \cite{q bernstein}. So,
we obtain the inequality (\ref{Lemma.4i}) for $\alpha =2.$

Now, we assume that the inequalty (\ref{Lemma.4i}) is provided for $\alpha
-1.$ This means that (\ref{Lemma.4i}) holds for $k\geq j+2$ and $\left[ k+1%
\right] _{q}-\left( q^{k}\left[ k+1\right] _{q}\right) ^{\frac{1}{\alpha -1}%
}\geq \left[ j+1\right] _{q}.$ It follows that 
\begin{equation*}
\begin{array}{ccc}
\left[ k+1\right] _{q}-\left( q^{k}\left[ k+1\right] _{q}\right) ^{\frac{1}{%
\alpha -1}} & \geq & \left[ j+1\right] _{q} \\ 
\left[ k+1\right] _{q}-\left[ j+1\right] _{q} & \geq & \left( q^{k}\left[ k+1%
\right] _{q}\right) ^{\frac{1}{\alpha -1}} \\ 
\left( \left[ k+1\right] _{q}-\left[ j+1\right] _{q}\right) ^{\alpha -1} & 
\geq & q^{k}\left[ k+1\right] _{q} \\ 
\frac{\left( \left[ k+1\right] _{q}-\left[ j+1\right] _{q}\right) ^{\alpha }%
}{\left[ k+1\right] _{q}-\left[ j+1\right] _{q}} & \geq & q^{k}\left[ k+1%
\right] _{q} \\ 
\left( \left[ k+1\right] _{q}-\left[ j+1\right] _{q}\right) ^{\alpha } & \geq
& \left( q^{k}\left[ k+1\right] _{q}\right) \left( \left[ k+1\right] _{q}-%
\left[ j+1\right] _{q}\right) .%
\end{array}%
\end{equation*}

Also, since $k\geq j+2$ then $\left[ k+1\right] _{q}\geq \left[ j+3\right]
_{q}$, the we have 
\begin{eqnarray*}
\left[ k+1\right] _{q}-\left[ j+1\right] _{q} &\geq &\left[ j+3\right] _{q}-%
\left[ j+1\right] _{q} \\
&=&\frac{1-q^{j+3}}{1-q}-\frac{1-q^{j+1}}{1-q} \\
&=&\frac{q^{j+1}-q^{j+3}}{1-q} \\
&=&\frac{q^{j+1}\left( 1-q^{2}\right) }{1-q} \\
&=&\frac{q^{j+1}\left( 1-q\right) \left( 1+q\right) }{1-q} \\
&=&q^{j+1}\left( 1+q\right) .
\end{eqnarray*}%
Since $\lim_{n\rightarrow \infty }q_{n}=1$ then $q^{j+1}\left( 1+q\right)
\geq 1$. So we obtain $\left[ k+1\right] _{q}-\left[ j+1\right] _{q}\geq 1$.
From the result given above, we get 
\begin{eqnarray*}
\left( \left[ k+1\right] _{q}-\left[ j+1\right] _{q}\right) ^{\alpha } &\geq
&\left( q^{k}\left[ k+1\right] _{q}\right) \left( \left[ k+1\right] _{q}-%
\left[ j+1\right] _{q}\right) \\
&\geq &\left( q^{k}\left[ k+1\right] _{q}\right) .
\end{eqnarray*}%
So, (\ref{Lemma.4i}) is true for $\alpha ,$ hence, for arbitrary $\alpha
=2,3,...,$ the inequality (\ref{Lemma.4i}) is provided when $\left[ k+1%
\right] _{q}-\left( q^{k}\left[ k+1\right] _{q}\right) ^{\frac{1}{\alpha -1}%
}\geq \left[ j+1\right] _{q}.$ So we obtain,%
\begin{equation*}
\dfrac{\overline{M}_{k,n,j}(x;q)}{\overline{M}_{k+1,n,j}(x;q)}\geq \frac{%
\left[ k+1\right] _{q}}{\left[ j+1\right] _{q}}\frac{\left[ k\right] _{q}-%
\left[ j+1\right] _{q}}{\left[ k+1\right] _{q}-\left[ j+1\right] _{q}}\geq 1.
\end{equation*}

$(ii)$ From the case $(ii)$ of Lemma $4$ in \cite{q bernstein}, we can write%
\begin{equation*}
\dfrac{\text{$\underline{M}$}_{k,n,j}(x)}{\text{$\underline{M}$}_{k-1,n,j}(x)%
}\geq \frac{\left[ j\right] _{q}}{\left[ k\right] _{q}}\frac{\left[ j\right]
_{q}-\left[ k\right] _{q}}{\left[ j\right] _{q}-\left[ k-1\right] _{q}}.
\end{equation*}

After this point we will use the our proof technique again. Same as proof of 
$(i),$ using the induction method, let's show that the following inequality 
\begin{equation}
\frac{\left[ j\right] _{q}}{\left[ k\right] _{q}}\frac{\left[ j\right] _{q}-%
\left[ k\right] _{q}}{\left[ j\right] _{q}-\left[ k-1\right] _{q}}\geq 1
\label{Lemma.4ii}
\end{equation}%
holds for $k\leq j-2$ and $\left[ k\right] _{q}+\left( q^{k-1}\left[ k\right]
_{q}\right) ^{\frac{1}{\alpha }}\leq \left[ j\right] _{q}$.

For $\alpha =2$, this inequality is $\left[ k\right] _{q}+\sqrt{q^{k-1}\left[
k\right] _{q}^{\frac{1}{\alpha }}}\leq \left[ j\right] _{q}$, which becomes
the demonstrated case $(ii)$ of Lemma $4$ in \cite{q bernstein}. So, we
obtain the inequality (\ref{Lemma.4ii}) is satisfied.

Now, we assume that (\ref{Lemma.4ii}) is correct for $\alpha -1.$ This means
that the inequality (\ref{Lemma.4ii}) is provided for $k\leq j-2$ and $\left[
k\right] _{q}+\left( q^{k-1}\left[ k\right] _{q}\right) ^{\frac{1}{\alpha -1}%
}\leq \left[ j\right] _{q}$. It follows%
\begin{equation*}
\begin{array}{ccc}
\left[ k\right] _{q}+\left( q^{k-1}\left[ k\right] _{q}\right) ^{\frac{1}{%
\alpha -1}} & \leq & \left[ j\right] _{q} \\ 
\left( q^{k-1}\left[ k\right] _{q}\right) ^{\frac{1}{\alpha -1}} & \leq & 
\left[ j\right] _{q}-\left[ k\right] _{q} \\ 
q^{k-1}\left[ k\right] _{q} & \leq & \left( \left[ j\right] _{q}-\left[ k%
\right] _{q}\right) ^{\alpha -1} \\ 
\left( q^{k-1}\left[ k\right] _{q}\right) \left( \left[ j\right] _{q}-\left[
k\right] _{q}\right) & \leq & \left( \left[ j\right] _{q}-\left[ k\right]
_{q}\right) ^{\alpha }.%
\end{array}%
\end{equation*}%
Also, since $k\leq j-2$ then $k+2\leq j,$ $\left[ k+1\right] _{q}\geq \left[
j+3\right] _{q}$. So we have%
\begin{eqnarray*}
\left[ j\right] _{q}-\left[ k\right] _{q} &\geq &\left[ k+2\right] _{q}-%
\left[ k\right] _{q} \\
&=&\frac{1-q^{k+2}}{1-q}-\frac{1-q^{k}}{1-q} \\
&=&\frac{q^{k}-q^{k+2}}{1-q} \\
&=&\frac{q^{k}\left( 1-q^{2}\right) }{1-q} \\
&=&q^{k}\left( 1+q\right) .
\end{eqnarray*}%
Since $\lim_{n\rightarrow \infty }q_{n}=1$ then $q^{k}\left( 1+q\right) \geq
1$. It follows $\left[ j\right] _{q}-\left[ k\right] _{q}\geq 1$ and we
obtain $q^{k-1}\left[ k\right] _{q}\leq \left( q^{k-1}\left[ k\right]
_{q}\right) \left( \left[ j\right] _{q}-\left[ k\right] _{q}\right) \leq
\left( \left[ j\right] _{q}-\left[ k\right] _{q}\right) ^{\alpha }.$Which
gives the desired result.
\end{proof}

\begin{lemma}
\cite{q bernstein} \label{Lemma 5} Let $q\in \left( 0,1\right) ,$ $j\in
\left\{ 0,1,...,n\right\} $ and $x\in \left[ \frac{\left[ j\right] _{q}}{%
\left[ n+1\right] _{q}},\frac{\left[ j+1\right] _{q}}{\left[ n+1\right] _{q}}%
\right] $. Then we get 
\begin{equation*}
\bigvee\limits_{k=0}^{n}p_{n,k}\left( x;q\right) =p_{n,j}(x;q).
\end{equation*}
\end{lemma}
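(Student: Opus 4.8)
The plan is to read the statement off from Lemma~\ref{Lemma 3}. Fix $j\in\{0,1,\dots,n\}$ and $x$ in the interval $\left[\frac{\left[ j\right] _{q}}{\left[ n+1\right] _{q}},\frac{\left[ j+1\right] _{q}}{\left[ n+1\right] _{q}}\right]$. For $0<x<1$ every factor occurring in $p_{n,k}(x;q)$ --- namely $x^{k}$ and the terms $1-q^{s}x$ with $0\le s\le n-k-1$ --- is strictly positive, so in particular $p_{n,j}(x;q)>0$, and the quotient $m_{k,n,j}(x;q)=p_{n,k}(x;q)/p_{n,j}(x;q)$ is well defined for every $k\in\{0,1,\dots,n\}$. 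By Lemma~\ref{Lemma 3} we have $m_{k,n,j}(x;q)\le 1$, which is exactly the pointwise comparison
\[
p_{n,k}(x;q)\le p_{n,j}(x;q)\qquad\text{for every }k\in\{0,1,\dots,n\}.
\]

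Taking the maximum over $k$ on the left-hand side yields $\bigvee_{k=0}^{n}p_{n,k}(x;q)\le p_{n,j}(x;q)$. For the reverse inequality it suffices to observe that $j$ is itself one of the indices $0,1,\dots,n$, so $p_{n,j}(x;q)$ occurs among the terms of the maximum and hence $p_{n,j}(x;q)\le\bigvee_{k=0}^{n}p_{n,k}(x;q)$. Combining the two inequalities gives $\bigvee_{k=0}^{n}p_{n,k}(x;q)=p_{n,j}(x;q)$, which is the assertion. The remaining boundary value $x=1$ lies only in the last subinterval, so it forces $j=n$; there $p_{n,n}(1;q)=1$ while $p_{n,k}(1;q)=0$ for $k<n$, and the conclusion holds by direct inspection.

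In this form the argument is only two lines, the whole point having been absorbed into Lemma~\ref{Lemma 3}, so I do not expect any genuine obstacle here. If one insisted on a proof that does not quote Lemma~\ref{Lemma 3}, the natural route would be to prove that, for fixed $x$, the finite sequence $k\mapsto p_{n,k}(x;q)$ is unimodal by examining the consecutive ratio
\[
\frac{p_{n,k+1}(x;q)}{p_{n,k}(x;q)}=\frac{\left[ n-k\right] _{q}}{\left[ k+1\right] _{q}}\cdot\frac{x}{1-q^{n-k-1}x},
\]
deciding for which $x$ this ratio is $\ge 1$ or $\le 1$, and then relating the position of the resulting maximising index to $j$ and to the partition points $\left[ j\right] _{q}/\left[ n+1\right] _{q}$. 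That last step --- reconciling the locations where consecutive basis functions overtake one another with the prescribed partition of $[0,1]$ --- is the delicate part of such a direct approach, and it is precisely what Lemma~\ref{Lemma 3} supplies; this is why going through that lemma is the efficient choice.
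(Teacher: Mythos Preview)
Your argument is correct. Lemma~\ref{Lemma 3} and Lemma~\ref{Lemma 5} are in fact equivalent statements (once one knows $p_{n,j}(x;q)>0$ on the relevant subinterval), so deducing the latter from the former is legitimate and immediate, exactly as you wrote. The only omission is the endpoint $x=0$ (which forces $j=0$ and is handled by the same direct inspection you gave for $x=1$), but the paper itself restricts to $x>0$ throughout, so this is harmless.

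As for comparison: the paper does not give its own proof of Lemma~\ref{Lemma 5}; it is quoted from \cite{q bernstein} just as Lemma~\ref{Lemma 3} is. In the source reference the natural order is the reverse of yours---one first locates the maximising index via the consecutive-ratio analysis you sketch in your final paragraph (this is the content of Lemma~\ref{Lemma 5}), and Lemma~\ref{Lemma 3} is then an immediate consequence. Your route simply reverses the implication, which is fine here because both lemmas are taken as given. If you were writing a self-contained treatment you would want to prove one of the two directly, and your outline of the ratio $\frac{p_{n,k+1}(x;q)}{p_{n,k}(x;q)}$ is exactly the right starting point; the crossover of that ratio with~$1$ occurs precisely at $x=[k+1]_q/[n+1]_q$, which is what ties the maximiser to the partition node~$j$.
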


\section{Approximation Results}

The main aim of this section is to obtain a better order of approximation
for the operators $B_{n,q}^{(M)}(f)\left( x\right) $ to the function $f$ by
means of the modulus of continuity. According to the following theorem we
can say that the order of approximation can be improved when the $\alpha $
is big enough. Moreover if we choose as $\alpha =2$, these results turn out
to be the results in \cite{q bernstein}.

\begin{theorem}
\label{Theorem 6} If $f:\left[ 0,1\right] \rightarrow 
\mathbb{R}
_{+}$ is continuous, then we have the following order of approximation for
the $B_{n,q}^{\left( M\right) }\left( f\right) \left( x\right) $ to the
function $f$ by means of the modulus of continuity:%
\begin{equation*}
\left\vert B_{n,q}^{(M)}(f)\left( x\right) -f\left( x\right) \right\vert
\leq 4\left( 1+\frac{2}{q^{n+1}}\right) \omega \left( f;\frac{1}{\left[ n+1%
\right] _{q}^{1-\frac{1}{\alpha }}}\right) ,\text{ for all }n\in 
\mathbb{N}
,x\in \left[ 0,1\right]
\end{equation*}%
where $\alpha =2,3,...$ and $q\in \left( 0,1\right) $ moreover $q=\left(
q_{n}\right) ,$ $n\in 
\mathbb{N}
$ and $\lim_{m\rightarrow \infty }q_{n}=1$.
\end{theorem}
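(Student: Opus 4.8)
The plan is to follow the scheme of \cite{q bernstein} and \cite{better bernstein}. By Corollary \ref{Corollary 1} it suffices to estimate $\delta_{n}(x;q)=B_{n,q}^{(M)}(\varphi_{x})(x)$ with $\varphi_{x}(t)=|t-x|$; concretely, I will show that $\delta_{n}(x;q)\le C\left(1+\tfrac{2}{q^{n+1}}\right)/[n+1]_{q}^{\,1-\frac{1}{\alpha}}$ for a small absolute constant $C$. Feeding this into Corollary \ref{Corollary 1} and using the elementary property $\omega(f,\lambda\delta)\le(\lambda+1)\,\omega(f,\delta)$ of the modulus of continuity then gives the stated bound after routine bookkeeping of constants.

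First I would fix $x>0$ and choose $j\in\{0,1,\dots,n\}$ with $x\in\left[\frac{[j]_{q}}{[n+1]_{q}},\frac{[j+1]_{q}}{[n+1]_{q}}\right]$. By Lemma \ref{Lemma 5} the denominator of $B_{n,q}^{(M)}(\varphi_{x})(x)$ equals $p_{n,j}(x;q)$, hence $\delta_{n}(x;q)=\bigvee_{k=0}^{n}M_{k,n,j}(x;q)$, and I would bound this maximum over three groups of indices. For $|k-j|\le1$ one has $M_{k,n,j}(x;q)=m_{k,n,j}(x;q)\,\bigl|\tfrac{[k]_{q}}{[n]_{q}}-x\bigr|\le\bigl|\tfrac{[k]_{q}}{[n]_{q}}-x\bigr|$ by Lemma \ref{Lemma 3}; combining $\bigl|\tfrac{[k]_{q}}{[n]_{q}}-\tfrac{[k]_{q}}{[n+1]_{q}}\bigr|\le\tfrac{q^{n}}{[n+1]_{q}}$ with the location of $x$ shows these three terms are $O(1/[n+1]_{q})$, hence dominated by $1/[n+1]_{q}^{\,1-1/\alpha}$.

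For $k\ge j+2$, Lemma \ref{Lemma 2}(i) gives $M_{k,n,j}(x;q)\le\left(1+\tfrac{2}{q^{n+1}}\right)\overline{M}_{k,n,j}(x;q)$, and Lemma \ref{Lemma 4}(i) says that $\overline{M}_{k,n,j}(x;q)$ is non-increasing in $k$ once $[k+1]_{q}-(q^{k}[k+1]_{q})^{1/\alpha}\ge[j+1]_{q}$; since $q=q_{n}\to1$, this condition holds for all $k$ beyond a threshold $\bar{k}$, so the maximum of $\overline{M}_{k,n,j}(x;q)$ over $k\ge j+2$ is attained at some index $k\le\bar{k}+1$. For such an index, using $m_{k,n,j}\le1$ from Lemma \ref{Lemma 3},
\[
\overline{M}_{k,n,j}(x;q)\le\frac{[k]_{q}}{[n+1]_{q}}-x\le\frac{[\bar{k}+1]_{q}-[j]_{q}}{[n+1]_{q}},
\]
and the failure of the threshold condition at $\bar{k}$ yields $[\bar{k}+1]_{q}-[j+1]_{q}<(q^{\bar{k}}[\bar{k}+1]_{q})^{1/\alpha}\le[n+1]_{q}^{1/\alpha}$, so the right-hand side is at most $\frac{[n+1]_{q}^{1/\alpha}+1}{[n+1]_{q}}\le\frac{2}{[n+1]_{q}^{\,1-1/\alpha}}$ for $n$ large. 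The symmetric range $k\le j-2$ is treated identically via Lemma \ref{Lemma 2}(ii) (which in fact gives $M_{k,n,j}\le\underline{M}_{k,n,j}$ with no extra factor) and Lemma \ref{Lemma 4}(ii).

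Collecting the three estimates produces the announced bound on $\delta_{n}(x;q)$, and the theorem follows. I expect the main obstacle to be the ranges $k\ge j+2$ and $k\le j-2$: one must pin down the threshold index $\bar{k}$ (resp. $\underline{k}$), separately dispose of the index $k=n$, which is not covered by Lemma \ref{Lemma 4}(i), and --- the genuinely new point compared with \cite{q bernstein} --- convert the \emph{negated} hypothesis of Lemma \ref{Lemma 4} into the key estimate $\tfrac{[k]_{q}}{[n+1]_{q}}-x\lesssim[n+1]_{q}^{\frac{1}{\alpha}-1}$, which is exactly where the improved exponent $1-\tfrac{1}{\alpha}$ comes from. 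The degenerate configurations in which the monotone range is empty, or $j$ lies near $0$ or $n$, require separate but routine attention.
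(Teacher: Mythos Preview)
Your proposal is correct and follows essentially the same route as the paper: reduce to estimating $\delta_{n}(x;q)=\bigvee_{k}M_{k,n,j}(x;q)$ via Corollary~\ref{Corollary 1} and Lemma~\ref{Lemma 5}, treat $|k-j|\le1$ directly with Lemma~\ref{Lemma 3}, and for $k\ge j+2$ (resp.\ $k\le j-2$) combine Lemma~\ref{Lemma 2} with the monotonicity of Lemma~\ref{Lemma 4}, the decisive step being the conversion of the \emph{negated} threshold hypothesis into the bound $\lesssim[n+1]_{q}^{1/\alpha-1}$. The only point the paper makes more explicit is the verification that the threshold set $\{k:[k+1]_{q}-(q^{k}[k+1]_{q})^{1/\alpha}\ge[j+1]_{q}\}$ is an upper interval, which it obtains by proving directly that $g_{\alpha,q}(k):=[k+1]_{q}-(q^{k}[k+1]_{q})^{1/\alpha}$ is increasing in $k$; you should supply this algebraic check rather than appeal to $q_{n}\to1$.
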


\begin{proof}
Since nonlinear max-product $q$-Bernstein operators satisfy the conditions
in Corollary \ref{Corollary 1}, we get%
\begin{equation}
\left\vert B_{n,q}^{(M)}(f)\left( x\right) -f\left( x\right) \right\vert
\leq 2\left[ 1+\frac{1}{\delta _{n}}B_{n,q}^{(M)}\left( \varphi _{x}\right)
\left( x\right) \right] \,\omega \left( f,\delta \right) ,  \label{teo1}
\end{equation}%
where $\varphi _{x}\left( t\right) =\left\vert t-x\right\vert .$ At this
point let us denote%
\begin{equation*}
E_{n,q}\left( x\right) :=B_{n,q}^{(M)}\left( \varphi _{x}\right) \left(
x\right) =\frac{\bigvee\limits_{k=0}^{n}p_{n,k}\left( x;q\right) \left\vert 
\frac{\left[ k\right] _{q}}{\left[ n\right] _{q}}-x\right\vert }{%
\bigvee\limits_{k=0}^{n}p_{n,k}\left( x;q\right) },\text{ }x\in \left[ 0,1%
\right] .
\end{equation*}%
Let $x\in \left[ \frac{\left[ j\right] _{q}}{\left[ n+1\right] _{q}},\frac{%
\left[ j+1\right] _{q}}{\left[ n+1\right] _{q}}\right] ,$ where $j\in
\left\{ 0,1,...,n\right\} $ is fixed, arbitary. By Lemma \ref{Lemma 5} we
can easily obtain 
\begin{equation*}
E_{n,q}\left( x\right) =\max\limits_{k=0,1,...,n}\left\{ M_{k,n,j}\left(
x;q\right) \right\} ,\text{ }x\in \left[ \frac{\left[ j\right] _{q}}{\left[
n+1\right] _{q}},\frac{\left[ j+1\right] _{q}}{\left[ n+1\right] _{q}}\right]
.
\end{equation*}%
It can be examined, from Corollary $6$ in \cite{q bernstein}, that $%
M_{k,n,0}\left( x;q\right) \leq \frac{1}{\left[ n+1\right] _{q}},$ for $%
k=0,1,2,...,n$ and $j=0,$ where $x\in \left[ 0,\frac{1}{\left[ n+1\right]
_{q}}\right] .$ So, we have an upper estimate for any $k=0,1,...,n,$ $%
E_{n,q}\left( x\right) \leq \frac{1}{\left[ n+1\right] _{q}}$ when $j=0.$

Now, it remains to find an upper estimate for each $M_{k,n,j}\left( x\right) 
$ when $j=1,2,...,n$, is fixed, $x\in \left[ \frac{\left[ j\right] _{q}}{%
\left[ n+1\right] _{q}},\frac{\left[ j+1\right] _{q}}{\left[ n+1\right] _{q}}%
\right] $, $k\in \left\{ 0,1,...,n\right\} $ and $\alpha =2,3,...$ . In fact
we will prove that 
\begin{equation}
M_{k,n,j}\left( x\right) \leq \frac{2\left( 1+\frac{2}{q^{n+1}}\right) }{%
\left[ n+1\right] _{q}^{1-\frac{1}{\alpha }}},  \label{teo2}
\end{equation}%
for all $x\in \left[ \frac{\left[ j\right] _{q}}{\left[ n+1\right] _{q}},%
\frac{\left[ j+1\right] _{q}}{\left[ n+1\right] _{q}}\right] ,$ $%
k=0,1,2,...,n$ which directly will implies that 
\begin{equation*}
E_{n,q}\left( x\right) \leq \frac{2\left( 1+\frac{2}{q^{n+1}}\right) }{\left[
n+1\right] _{q}^{1-\frac{1}{\alpha }}},\text{ for all }x\in \left[ 0,1\right]
,\text{ }n\in \mathbb{N}
\end{equation*}%
and choosing $\delta _{n}=\frac{2\left( 1+\frac{2}{q^{n+1}}\right) }{\left[
n+1\right] _{q}^{1-\frac{1}{\alpha }}}$ in (\ref{teo1}) we obtain the
estimate in the statement immediately.

So, in order to completing the proof of (\ref{teo2}), we consider the
following cases:

$1)$ $k\in \left\{ j-1,j,j+1\right\} $,

$2)$ $k\geq j+2,$

and

$3)$ $k\leq j-2.$\newline

Case $1).$ If $k=j-1,$ then $M_{j-1,n,j}\left( x;q\right) =m_{j-1,n,j}\left(
x;q\right) \left( x-\frac{\left[ j-1\right] _{q}}{\left[ n\right] _{q}}%
\right) .$ Since by Lemma \ref{Lemma 3}, we get 
\begin{eqnarray*}
M_{j-1,n,j}\left( x;q\right) &\leq &x-\frac{\left[ j-1\right] _{q}}{\left[ n%
\right] _{q}} \\
&\leq &\frac{\left[ j+1\right] _{q}}{\left[ n+1\right] _{q}}-\frac{\left[ j-1%
\right] _{q}}{\left[ n+1\right] _{q}} \\
&=&\frac{q^{j-1}\left( 1+q\right) }{\left[ n+1\right] _{q}}\leq \frac{2}{%
\left[ n+1\right] _{q}}.
\end{eqnarray*}

If $k=j,$ then since $x\in \left[ \frac{\left[ j\right] _{q}}{\left[ n+1%
\right] _{q}},\frac{\left[ j+1\right] _{q}}{\left[ n+1\right] _{q}}\right] $
we get 
\begin{eqnarray*}
M_{j,n,j}\left( x;q\right) &=&\frac{p_{n,j}\left( x\right) \left\vert \frac{%
\left[ j\right] _{q}}{\left[ n\right] _{q}}-x\right\vert }{p_{n,j}\left(
x\right) }=\left\vert \frac{\left[ j\right] _{q}}{\left[ n\right] _{q}}%
-x\right\vert \leq \left\vert \frac{\left[ j\right] _{q}}{\left[ n\right]
_{q}}-\frac{\left[ j\right] _{q}}{\left[ n+1\right] _{q}}\right\vert \\
&=&\frac{\left[ j\right] _{q}q^{n}}{\left[ n\right] _{q}\left[ n+1\right]
_{q}}\leq \frac{1}{\left[ n+1\right] _{q}}.
\end{eqnarray*}

If $k=j+1,$ then $M_{j+1,n,j}\left( x;q\right) =m_{j+1,n,j}\left( x;q\right)
\left( \frac{\left[ j+1\right] _{q}}{\left[ n\right] _{q}}-x\right) .$ Since
by Lemma \ref{Lemma 3} we get 
\begin{eqnarray*}
M_{j+1,n,j}\left( x;q\right) &\leq &\frac{\left[ j+1\right] _{q}}{\left[ n%
\right] _{q}}-x \\
&\leq &\frac{\left[ j+1\right] _{q}}{\left[ n\right] _{q}}-\frac{\left[ j%
\right] _{q}}{\left[ n+1\right] _{q}} \\
&=&\frac{\left[ j+1\right] _{q}\left[ n+1\right] _{q}-\left[ j\right] _{q}%
\left[ n\right] _{q}}{\left[ n\right] _{q}\left[ n+1\right] _{q}} \\
&\leq &\frac{3}{\left[ n+1\right] _{q}}.
\end{eqnarray*}

Case $2).$ Subcase $a).$ Firstly assume that $k\geq j+2$ and $\left[ k+1%
\right] _{q}-\left( q^{k}\left[ k+1\right] _{q}\right) ^{\frac{1}{\alpha }}<%
\left[ j+1\right] _{q}.$ Since the hypothesis also $q\left[ k\right]
_{q}-\left( q^{k}\left[ k+1\right] _{q}\right) ^{\frac{1}{\alpha }}<q\left[ j%
\right] _{q}$ and since Lemma \ref{Lemma 3} we get%
\begin{eqnarray*}
\overline{M}_{k,n,j}\left( x;q\right) &=&m_{k,n,j}\left( x;q\right) \left( 
\frac{\left[ k\right] _{q}}{\left[ n+1\right] _{q}}-x\right) \\
&\leq &\frac{\left[ k\right] _{q}}{\left[ n+1\right] _{q}}-\frac{\left[ j%
\right] _{q}}{\left[ n+1\right] _{q}} \\
&\leq &\frac{\left[ k\right] _{q}}{\left[ n+1\right] _{q}}-\frac{\left[ k%
\right] _{q}-\frac{1}{q}\left( q^{k}\left[ k+1\right] _{q}\right) ^{\frac{1}{%
\alpha }}}{\left[ n+1\right] _{q}} \\
&=&\frac{\left( q^{k-\alpha }\left[ k+1\right] _{q}\right) ^{\frac{1}{\alpha 
}}}{\left[ n+1\right] _{q}}\leq \frac{\left( q^{k-\alpha }\left[ n+1\right]
_{q}\right) ^{\frac{1}{\alpha }}}{\left[ n+1\right] _{q}} \\
&\leq &\frac{1}{\left[ n+1\right] _{q}^{1-\frac{1}{\alpha }}},
\end{eqnarray*}%
where $k\geq \alpha .$ Because, if $k<\alpha $ then $q^{k-\alpha }=\frac{1}{%
q^{\alpha -k}}\geq 1.$

Subcase $b).$ Assume now that $k\geq j+2$ and $\left[ k+1\right] _{q}-\left(
q^{k}\left[ k+1\right] _{q}\right) ^{\frac{1}{\alpha }}\geq \left[ j+1\right]
_{q}.$ Let's define a function $g_{\alpha ,q}$ as $g_{\alpha ,q}\left(
k\right) :=\left[ k+1\right] _{q}-\left( q^{k}\left[ k+1\right] _{q}\right)
^{\frac{1}{\alpha }}$. It can easily be shown that the function $g_{\alpha
,q}$ is increasing on the interval $\left[ 0,1\right] .$ To show this, after
simple calculations, we get%
\begin{equation*}
g_{\alpha ,q}\left( k+1\right) -g_{\alpha ,q}\left( k\right) \geq q^{k+1}-q^{%
\frac{k}{\alpha }}\left( \left[ k+2\right] _{q}^{\frac{1}{\alpha }}-\left[
k+1\right] _{q}^{\frac{1}{\alpha }}\right) .
\end{equation*}%
Now, since $\alpha =2,3,...$ then if we denote the number $H_{\alpha ,q}$ as 
$H_{\alpha ,q}:=\left( \left[ k+2\right] _{q}^{\alpha -1}\right) ^{\frac{1}{%
\alpha }}+\left( \left[ k+2\right] _{q}^{\alpha -2}\left[ k+1\right]
_{q}\right) ^{\frac{1}{\alpha }}+...+\left( \left[ k+2\right] _{q}\left[ k+1%
\right] _{q}^{\alpha -2}\right) ^{\frac{1}{\alpha }}+\left( \left[ k+1\right]
_{q}^{\alpha -1}\right) ^{\frac{1}{\alpha }}$, Then we get $H_{\alpha
,q}\geq 1$. Therefore, we have 
\begin{eqnarray*}
g\left( k+1\right) -g\left( k\right) &\geq &q^{k+1}-q^{\frac{k}{\alpha }%
}\left( \left[ k+2\right] _{q}^{\frac{1}{\alpha }}-\left[ k+1\right] _{q}^{%
\frac{1}{\alpha }}\right) \\
&=&q^{k+1}-q^{\frac{k}{\alpha }}\left( \left[ k+2\right] _{q}^{\frac{1}{%
\alpha }}-\left[ k+1\right] _{q}^{\frac{1}{\alpha }}\right) \frac{H_{\alpha }%
}{H_{\alpha }} \\
&=&q^{k+1}-q^{\frac{k}{\alpha }}\frac{\left[ k+2\right] _{q}-\left[ k+1%
\right] _{q}}{H_{\alpha }} \\
&=&q^{k+1}-\frac{q^{\frac{k}{\alpha }}q^{k+1}}{H_{\alpha ,q}}=q^{k+1}\left(
1-\frac{q^{\frac{k}{\alpha }}}{H_{\alpha ,q}}\right) \\
&\geq &q^{k+1}\left( 1-\frac{1}{H_{\alpha ,q}}\right) >0.
\end{eqnarray*}%
It follows that there exists a maximum value $\bar{k}\in \left\{
1,2,...,n\right\} $, satisfying the inequality $\left[ \bar{k}+1\right]
_{q}-\left( q^{\bar{k}}\left[ \bar{k}+1\right] _{q}\right) ^{\frac{1}{\alpha 
}}<\left[ j+1\right] _{q}.$ Let $\tilde{k}=\bar{k}+1,$ then for all $k\geq 
\tilde{k},$ we have $\left[ k+1\right] _{q}-\left( q^{k}\left[ k+1\right]
_{q}\right) ^{\frac{1}{\alpha }}\geq \left[ j+1\right] _{q}.$Since $%
g_{\alpha ,q}$ is increasing, and because the hypothesis is $k\geq j+2$ it
is easy to see that $\tilde{k}>j+1$ and so $\tilde{k}\geq j+2.$ Also, from %
\ref{Lemma 3} we have%
\begin{eqnarray*}
\overline{M}_{\tilde{k}+1,n,j}\left( x;q\right) &=&m_{\tilde{k}+1,n,j}\left(
x\right) \left( \frac{\left[ \tilde{k}+1\right] _{q}}{\left[ n+1\right] _{q}}%
-x\right) \\
&\leq &\frac{\left[ \bar{k}+1\right] _{q}}{\left[ n+1\right] _{q}}-x\leq 
\frac{\left[ \bar{k}+1\right] _{q}}{\left[ n+1\right] _{q}}-\frac{\left[ j%
\right] _{q}}{\left[ n+1\right] _{q}}.
\end{eqnarray*}%
Thus, since 
\begin{equation*}
\left[ j\right] _{q}>\left[ \bar{k}+1\right] _{q}-q^{j}-\left( q^{\bar{k}}%
\left[ \bar{k}+1\right] _{q}\right) ^{\frac{1}{\alpha }},
\end{equation*}%
we can write%
\begin{eqnarray*}
\overline{M}_{\tilde{k}+1,n,j}\left( x;q\right) &\leq &\frac{\left[ \bar{k}+1%
\right] _{q}}{\left[ n+1\right] _{q}}-\frac{\left[ \bar{k}+1\right]
_{q}-q^{j}-\left( q^{\bar{k}}\left[ \bar{k}+1\right] _{q}\right) ^{\frac{1}{%
\alpha }}}{\left[ n+1\right] _{q}} \\
&=&\frac{q^{j}+\left( q^{\bar{k}}\left[ \bar{k}+1\right] _{q}\right) ^{\frac{%
1}{\alpha }}}{\left[ n+1\right] _{q}}\leq \frac{q^{j}+\left[ n+1\right]
_{q}^{\frac{1}{\alpha }}}{\left[ n+1\right] _{q}} \\
&\leq &\frac{2\left[ n+1\right] _{q}^{\frac{1}{\alpha }}}{\left[ n+1\right]
_{q}}=\frac{2}{\left[ n+1\right] _{q}^{1-\frac{1}{\alpha }}}.
\end{eqnarray*}%
Lemma \ref{Lemma 4}, $(i),$ indicates that 
\begin{equation*}
\overline{M}_{\tilde{k}+1,n,j}\left( x;q\right) \geq \overline{M}_{\tilde{k}%
+2,n,j}\left( x;q\right) \geq ...\geq \overline{M}_{n,n,j}\left( x;q\right)
\end{equation*}%
is satisfied. Thus, we obtain $\overline{M}_{k,n,j}\left( x;q\right) \leq 
\frac{2}{\,\left[ n+1\right] _{q}^{1-\frac{1}{\alpha }}}$ for all $k\in
\left\{ \tilde{k}+1,\tilde{k}+2,...,n\right\} $.

Thereby, we obtain in both subcases, by Lemma \ref{Lemma 2}, $(i)$ too, $%
M_{k,n,j}\left( x;q\right) \leq \frac{2\left( 1+\frac{2}{q^{n+1}}\right) }{\,%
\left[ n\right] _{q}^{1--\frac{1}{\alpha }}}$.

Case $3).$ Subcase $a).$ Assume first that $\left[ k\right] _{q}+\left(
q^{k-1}\left[ k\right] _{q}\right) ^{\frac{1}{\alpha }}\geq \left[ j\right]
_{q}$. Then we have%
\begin{eqnarray*}
\text{$\underline{M}$}_{k,n,j}\left( x;q\right) &=&m_{k,n,j}\left(
x;q\right) \left( x-\frac{\left[ k\right] _{q}}{\left[ n+1\right] _{q}}%
\right) \\
&\leq &\frac{\left[ j+1\right] _{q}}{\left[ n+1\right] _{q}}-\frac{\left[ k%
\right] _{q}}{\left[ n+1\right] _{q}}=\frac{\left[ j\right] _{q}+q^{j}}{%
\left[ n+1\right] _{q}}-\frac{\left[ k\right] _{q}}{\left[ n+1\right] _{q}}
\\
&\leq &\frac{\left[ k\right] _{q}+\left( q^{k-1}\left[ k\right] _{q}\right)
^{\frac{1}{\alpha }}+q^{j}}{\left[ n+1\right] _{q}}-\frac{\left[ k\right]
_{q}}{\left[ n+1\right] _{q}} \\
&=&\frac{\left( q^{k-1}\left[ k\right] _{q}\right) ^{\frac{1}{\alpha }}+q^{j}%
}{\left[ n+1\right] _{q}}\leq \frac{\left[ n+1\right] _{q}^{\frac{1}{\alpha }%
}+1}{\left[ n+1\right] _{q}} \\
&\leq &\frac{2}{\,\left[ n+1\right] _{q}^{1-\frac{1}{\alpha }}}.
\end{eqnarray*}

Subcase $b).$ Assume now that $k\leq j-2$ and $\left[ k\right] _{q}+\left(
q^{k-1}\left[ k\right] _{q}\right) ^{\frac{1}{\alpha }}<\left[ j\right] _{q}$%
. Let $\bar{k}\in \left\{ 0,1,2,...,n\right\} $ be the minimum value such
that $\left[ \bar{k}\right] _{q}+\left( q^{\bar{k}-1}\left[ \bar{k}\right]
_{q}\right) ^{\frac{1}{\alpha }}\geq \left[ j\right] _{q}$. Let $\tilde{k}=%
\bar{k}-1.$ We can write that $\left[ \bar{k}-1\right] _{q}+\left( q^{\bar{k}%
-2}\left[ \bar{k}-1\right] _{q}\right) ^{\frac{1}{\alpha }}<\left[ j\right]
_{q}.$ Because the hypothesis is $k\leq j-2$ it is easy to see that $\tilde{k%
}\leq j-2.$ Also, we get%
\begin{eqnarray*}
\text{$\underline{M}$}_{\bar{k}-1,n,j}\left( x;q\right) &=&m_{\bar{k}%
-1,n,j}\left( x;q\right) \left( x-\frac{\left[ \bar{k}-1\right] _{q}}{\left[
n+1\right] _{q}}\right) \\
&\leq &x-\frac{\left[ \bar{k}-1\right] _{q}}{\left[ n+1\right] _{q}}\leq 
\frac{\left[ j+1\right] _{q}}{\left[ n+1\right] _{q}}-\frac{\left[ \bar{k}-1%
\right] _{q}}{\left[ n+1\right] _{q}} \\
&\leq &\frac{\left[ j\right] _{q}+q^{j}}{\left[ n+1\right] _{q}}-\frac{\left[
\bar{k}-1\right] _{q}}{\left[ n+1\right] _{q}}.
\end{eqnarray*}%
Since%
\begin{equation*}
\left[ j\right] _{q}\leq \left[ \bar{k}\right] _{q}+\left( q^{\bar{k}-1}%
\left[ \bar{k}\right] _{q}\right) ^{\frac{1}{\alpha }},
\end{equation*}%
we see that%
\begin{eqnarray*}
\text{$\underline{M}$}_{\tilde{k}-1,n,j}\left( x;q\right) &\leq &\frac{\left[
\bar{k}\right] _{q}+q^{j}+\left( q^{\bar{k}-1}\left[ \bar{k}\right]
_{q}\right) ^{\frac{1}{\alpha }}}{\left[ n+1\right] _{q}}-\frac{\left[ \bar{k%
}-1\right] _{q}}{\left[ n+1\right] _{q}} \\
&=&\frac{q^{j}+q^{\bar{k}-1}+\left( q^{\bar{k}-1}\left[ \bar{k}\right]
_{q}\right) ^{\frac{1}{\alpha }}}{\left[ n+1\right] _{q}}\leq \frac{2+\left[
n+1\right] _{q}^{\frac{1}{\alpha }}}{\left[ n+1\right] _{q}} \\
&\leq &\frac{3}{\left( \left[ n+1\right] _{q}\right) ^{1-\frac{1}{\alpha }}}%
\,.
\end{eqnarray*}%
By Lemma \ref{Lemma 4}, $(ii)$ , it follows that $\underline{M}_{\tilde{k}%
-1,n,j}\left( x;q\right) \geq \underline{M}_{\tilde{k}-2,n,j}\left(
x;q\right) \geq ...\geq \underline{M}_{0,n,j}\left( x;q\right) .$ Thus we
obtain for all $k\in \left\{ 0,1,...,\tilde{k}-1,\tilde{k}\right\} $ 
\begin{equation*}
\underline{M}_{k,n,j}\left( x;q\right) \leq \frac{3}{\left( \left[ n+1\right]
_{q}\right) ^{1-\frac{1}{\alpha }}}.
\end{equation*}%
Therefore, in both subcases, by Lemma \ref{Lemma 2}, $(ii)$, we get%
\begin{equation*}
M_{k,n,j}\left( x;q\right) \leq \frac{3}{\,\left( \left[ n+1\right]
_{q}\right) ^{1-\frac{1}{\alpha }}}.
\end{equation*}%
As a result, the inequation (\ref{teo2}) is satisfied for all $k,j\in
\left\{ 0,1,2,...,n\right\} ,$ $n\in 
\mathbb{N}
$ and $x\in \left[ 0,1\right] .$ So, we have desired result.
\end{proof}

\begin{corollary}
In \cite{q bernstein}, the order of approximation for nonlinear max-product $%
q$-Bernstein operators was found as $1/\sqrt{\left[ n\right] _{q}}$ by means
of modulus of continuity. However, due to Theorem \ref{Theorem 6}, we proved
that the order of approximation is $1/\left[ n+1\right] _{q}^{1-\frac{1}{%
\alpha }}.$ For big enough $\alpha $, $1/\left[ n+1\right] _{q}^{1-\frac{1}{%
\alpha }}$ tends to $1/\left[ n+1\right] _{q}$. As a result, since $1-\frac{1%
}{\alpha }\geq \frac{1}{2}$ for $\alpha =2,3,...,$ this selection of $\alpha 
$ improving the order of approximation.
\end{corollary}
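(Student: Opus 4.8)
The plan is to deduce the corollary directly from Theorem~\ref{Theorem 6} by an elementary comparison of exponents, so almost no real work is needed. First I would fix $f\in C_{+}[0,1]$, an integer $\alpha=2,3,\dots$, and a sequence $q=(q_n)$ with $q_n\to 1$, as required in Theorem~\ref{Theorem 6}. That theorem gives
\[
\left\vert B_{n,q}^{(M)}(f)(x)-f(x)\right\vert \le 4\left(1+\frac{2}{q^{n+1}}\right)\omega\left(f;\frac{1}{[n+1]_q^{\,1-\frac1\alpha}}\right),\qquad x\in[0,1],
\]
so the order of approximation is governed by $\delta_{n,\alpha}:=1/[n+1]_q^{\,1-\frac1\alpha}$, whereas the estimate of Duman in \cite{q bernstein} has order $\delta_n^{D}:=1/\sqrt{[n]_q}=1/[n]_q^{1/2}$.

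Next I would compare $\delta_{n,\alpha}$ with $\delta_n^{D}$. For every $\alpha=2,3,\dots$ one has $1-\frac1\alpha\ge\frac12$, and the $q$-integers are nondecreasing in $n$, so $[n+1]_q\ge[n]_q\ge 1$. Monotonicity of the power function $t\mapsto[n+1]_q^{\,t}$ (a base $\ge 1$) then yields
\[
[n+1]_q^{\,1-\frac1\alpha}\ \ge\ [n+1]_q^{\,1/2}\ \ge\ [n]_q^{\,1/2},
\]
hence $\delta_{n,\alpha}\le\delta_n^{D}$ for all $n$, with a strict inequality once $\alpha\ge 3$. Since $\omega(f;\cdot)$ is nondecreasing, $\omega(f;\delta_{n,\alpha})\le\omega(f;\delta_n^{D})$, which is precisely the claimed improvement of the order of approximation.

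For the limiting assertion I would simply note that, for fixed $n$, the map $\alpha\mapsto[n+1]_q^{-(1-1/\alpha)}$ is continuous and $1-\frac1\alpha\to1$ as $\alpha\to\infty$; therefore $\delta_{n,\alpha}=1/[n+1]_q^{\,1-\frac1\alpha}\to 1/[n+1]_q$. Combining these observations gives the corollary.

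I do not anticipate any genuine obstacle here: the argument rests only on the inequality $1-\frac1\alpha\ge\frac12$, monotonicity of powers, and monotonicity of the modulus of continuity. The single point that deserves a remark is that ``better order'' refers to the size of the argument $\delta_{n,\alpha}$ fed into $\omega(f;\cdot)$, not to the multiplicative constant $4\left(1+\frac{2}{q^{n+1}}\right)$, which is in fact larger than the constant $2$ appearing in Corollary~\ref{Corollary 1}; since the order of approximation is by definition the rate at which $\delta_{n,\alpha}\to0$, this constant plays no role in the comparison.
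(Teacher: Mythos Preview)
Your proposal is correct and follows the same route as the paper: in the paper the corollary is stated without a separate proof, the justification being exactly the observation that $1-\tfrac{1}{\alpha}\ge\tfrac12$ together with the limit $1-\tfrac{1}{\alpha}\to1$, and you have simply spelled out these elementary monotonicity steps (including $[n+1]_q\ge[n]_q\ge1$ and the monotonicity of $\omega(f;\cdot)$) in more detail than the paper does.
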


\section{$A$-Statistical Approximation}

Firstly, let's recall some of the statistical convergence concepts that Fast
examined in \cite{fast}. Let's consider that the $A:=\left( a_{nk}\right) $
is infinite summability matrix. For a given sequence $x=\left( x_{k}\right) $%
, the $A$-transform of $x$, denoted by $\left( Ax=\left( Ax\right)
_{n}\right) $, is defined as 
\begin{equation*}
\left( Ax\right) _{n}:=\sum_{k=1}^{\infty }a_{nk}x_{k},
\end{equation*}%
provided the series that converges for each $n\cdot A$ is said to be regular
if $\lim_{n}(Ax)_{n}=L$ whenever $\lim x=L$ \cite{hardy divergent}. Assume
that $A$ is a regular summability matrix that is non-negative. Then $x$ is $%
A $-statistically convergent to $L$ if for every $\epsilon >0$,%
\begin{equation*}
\lim_{n}\sum_{k:\left\vert x_{k}-L\right\vert \geq \epsilon }a_{nk}=0
\end{equation*}

and we write $st_{A}-\lim x=L$ \cite{freedman}. In actuality, $x$ is $A$%
-statistically convergent to $L$ if and only if, for every $\epsilon >0$, $%
\delta _{A}\left( k\in 
\mathbb{N}
:\left\vert x_{k}-L\right\vert \geq \epsilon \right) =0$, where $\delta
_{A}\left( K\right) $ indicates the $A$-density of the subset $K$ of the
natural numbers, and is given by $\delta _{A}\left( K\right)
:=\lim_{n}\sum_{k=1}^{\infty }a_{nk}\chi _{K}\left( k\right) ,$ provided the
limit exists, where $\chi _{K}$ is the characteristic function of $K$. $A$%
-statistical convergence simplifies to statistical convergence if $A=C_{1}$,
the Ces\'{a}ro matrix of order one \cite{fast}. Moreover, $A$-statistical
convergence and ordinary convergence coincide when $A=I$, the identity
matrix, is used.

Using the concept of statistical convergence, Gadjiev and Orhan \cite%
{Gadjiev} proved the Korovkin type the conclusion for any sequence of
positive linear operators; nevertheless, it also holds true for $A$%
-statistical convergence (see \cite{Duman A-ist}). Notice that Duman
examined the max-product operators' statistical convergence in \cite{duman
max-prod st conver}.

Now, let $A=\left( a_{nk}\right) $ be a nonnegative regular summability
matrix. Then replacing $q$ in (\ref{q-Berns op}) by a sequence $\left(
q_{n}\right) ,$

\begin{equation}
0<q_{n}<1\text{, }st_{A}-\lim_{n}q_{n}^{n}=1\text{ and }st_{A}-\lim_{n}\frac{%
1}{\left[ n\right] _{q_{n}}}=0.  \label{st-limit q}
\end{equation}%
For example, take $A=C_{1}$, the Ces\'{a}ro matrix of order one, and define
the sequence $\left( q_{n}\right) $ by%
\begin{equation*}
q_{n}=\left\{ 
\begin{array}{ccc}
0, & if\text{ \ }n=m^{2}, & \left( m=1,2,...\right) \\ 
1-\frac{e^{-n}}{n}, & if\text{ \ }n\neq m^{2}. & 
\end{array}%
\right.
\end{equation*}

Since $1\geq \left( 1-\frac{e^{-n}}{n}\right) ^{n}\geq 1-e^{-n}$, and the $%
C_{1}$- density (or natural density) of the set of all squares is zero, $%
st_{A}-\lim_{n}q_{n}^{n}=1$. On the other hand, if $n\neq m^{2}$ then, for $%
r=0,1,...,n-1,$ $q_{n}^{r}=\left( 1-\frac{e^{-n}}{n}\right) ^{r}\geq 1-r%
\frac{e^{-n}}{n}$. Hence, if $n\neq m^{2}$ then, $\left[ n\right]
_{q_{n}}=1+q_{n}+...+q_{n}^{n-1}\geq n-\frac{n\left( n-1\right) }{2}\frac{%
e^{-n}}{n}.$ This guarantees that $st_{A}-\lim_{n}\frac{1}{\left[ n\right]
_{q_{n}}}=0$. But, $\left( q_{n}\right) $ does not ordinary converge.

Additionally, statistical convergence of max-product $q$-Bernstein operators
was studied by Duman in \cite{q bernstein}. However, since we found a better
order of approximation in Theorem \ref{Theorem 6} than in Theorem $8$ in 
\cite{q bernstein}, we will improve the approximation with the $A$%
-statistical convergence theorem we will give in this section.

\begin{theorem}
\label{Theorem 8} Let $A=\left( a_{nk}\right) $ be a non-negative regular
summability matrix and $\left( q_{n}\right) $ be a sequence satisfying (\ref%
{st-limit q}). Then for every $f\in C_{+}\left[ 0,1\right] $ we obtain 
\begin{equation*}
st_{A}-\lim_{n}\left( \sup_{x\in \left[ 0,1\right] }\left\vert
B_{n,q}^{\left( M\right) }\left( f\right) \left( x\right) -f\left( x\right)
\right\vert \right) =0.
\end{equation*}
\end{theorem}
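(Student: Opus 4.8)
The plan is to combine the quantitative estimate of Theorem~\ref{Theorem 6} with the hypotheses in (\ref{st-limit q}) on the sequence $\left(q_{n}\right)$. By Theorem~\ref{Theorem 6}, for every $n\in\mathbb{N}$ and every $x\in\left[0,1\right]$ we have
\begin{equation*}
\left\vert B_{n,q}^{(M)}(f)\left(x\right)-f\left(x\right)\right\vert
\leq 4\left(1+\frac{2}{q_{n}^{n+1}}\right)\omega\left(f;\frac{1}{\left[n\right]_{q_{n}}^{1-\frac{1}{\alpha}}}\right),
\end{equation*}
with $\alpha$ fixed (say $\alpha=2$, which suffices for the convergence statement). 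Since the right-hand side does not depend on $x$, taking the supremum over $x\in\left[0,1\right]$ preserves the bound. So the task reduces to showing that the sequence
\begin{equation*}
u_{n}:=4\left(1+\frac{2}{q_{n}^{n+1}}\right)\omega\left(f;\frac{1}{\left[n\right]_{q_{n}}^{1-\frac{1}{\alpha}}}\right)
\end{equation*}
is $A$-statistically convergent to $0$, and then invoking the standard fact that $0\le \sup_{x}\left\vert B_{n,q}^{(M)}(f)(x)-f(x)\right\vert \le u_{n}$ together with the monotonicity property of $A$-statistical convergence (if $0\le v_{n}\le u_{n}$ and $st_{A}-\lim u_{n}=0$ then $st_{A}-\lim v_{n}=0$) gives the result.

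To prove $st_{A}-\lim_{n}u_{n}=0$ I would treat the two factors separately. For the modulus factor: from (\ref{st-limit q}) we have $st_{A}-\lim_{n}\frac{1}{\left[n\right]_{q_{n}}}=0$; since $1-\frac{1}{\alpha}\in(0,1]$ and $t\mapsto t^{1-1/\alpha}$ is continuous at $0$ with value $0$, it follows that $st_{A}-\lim_{n}\frac{1}{\left[n\right]_{q_{n}}^{1-1/\alpha}}=0$, and then, using continuity of $\omega(f;\cdot)$ at $0^{+}$ (i.e. $\lim_{\delta\to 0^{+}}\omega(f;\delta)=0$ since $f\in C_{+}\left[0,1\right]$ is uniformly continuous on the compact interval), we get $st_{A}-\lim_{n}\omega\left(f;\frac{1}{\left[n\right]_{q_{n}}^{1-1/\alpha}}\right)=0$. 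For the bracketed factor: $q_{n}^{n+1}=q_{n}\cdot q_{n}^{n}$, and since $0<q_{n}<1$ and $st_{A}-\lim_{n}q_{n}^{n}=1$ forces $st_{A}-\lim_{n}q_{n}=1$ (because $q_{n}^{n}\le q_{n}\le 1$), we obtain $st_{A}-\lim_{n}q_{n}^{n+1}=1$ and hence $1+\frac{2}{q_{n}^{n+1}}$ is $A$-statistically convergent to $3$. The product of an $A$-statistically null sequence with an $A$-statistically convergent (hence $A$-statistically bounded) sequence is $A$-statistically null; this yields $st_{A}-\lim_{n}u_{n}=0$, completing the proof.

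The main technical point to handle with care is the passage from $st_{A}$-convergence of $q_{n}^{n}$ and of $1/[n]_{q_{n}}$ through the continuous maps $t\mapsto t^{1-1/\alpha}$ and $\delta\mapsto\omega(f;\delta)$, and the product rule for $A$-statistical limits. These are all standard facts about $A$-statistical convergence — a nonnegative regular matrix $A$ has $\delta_{A}(\mathbb{N})=1$, and the $A$-density is finitely additive on the relevant sets, so a finite union of $A$-density-zero exceptional sets still has $A$-density zero, which is exactly what is needed to combine the estimates on the two factors. I do not expect any genuine obstacle here; the proof is essentially a one-line reduction to Theorem~\ref{Theorem 6} followed by routine $A$-statistical bookkeeping. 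One should simply remark that the uniformity in $x$ is automatic because the bound in Theorem~\ref{Theorem 6} is already independent of $x$.
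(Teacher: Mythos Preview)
Your proposal is correct and follows essentially the same approach as the paper: apply the uniform estimate of Theorem~\ref{Theorem 6}, then use the hypotheses in (\ref{st-limit q}) to push each factor to its $A$-statistical limit and conclude. You in fact supply more detail than the paper does (the argument that $st_{A}\text{-}\lim q_{n}^{n+1}=1$ via $q_{n}^{n}\le q_{n}\le 1$, the continuity of $\omega(f;\cdot)$ at $0$, and the product rule for $A$-statistical limits), all of which is correct and only sharpens what the paper leaves implicit.
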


\begin{proof}
Let $f\in C_{+}\left[ 0,1\right] .$ Using the monotonicity of the modulus of
continuity, supremum over $x\in \left[ 0,1\right] $, and replacing $q$ with $%
\left( q_{n}\right) $, we get from Theorem \ref{Theorem 6} that for every $%
n\in 
\mathbb{N}
,$%
\begin{equation}
E_{n}^{q}:=\sup_{x\in \left[ 0,1\right] }\left\vert B_{n,q_{n}}\left(
f\right) \left( x\right) -f\left( x\right) \right\vert \leq 4\left( 1+\frac{2%
}{q_{n}^{n+1}}\right) \omega \left( f;\frac{1}{\left[ n\right] _{q_{n}}^{1-%
\frac{1}{\alpha }}}\right) .  \label{(teo 8.1)}
\end{equation}%
So, it is enough to prove 
\begin{equation*}
st_{A}-\lim_{n}E_{n}^{q}=0.
\end{equation*}%
The condition in hypothesis (\ref{st-limit q}) implies that%
\begin{equation*}
st_{A}-\lim_{n}\frac{1}{\left[ n\right] _{q_{n}}^{1-\frac{1}{\alpha }}}=0%
\text{.}
\end{equation*}%
Then we have 
\begin{equation}
st_{A}-\lim_{n}\omega \left( f;\frac{1}{\left[ n\right] _{q_{n}}^{1-\frac{1}{%
\alpha }}}\right) =0  \label{(teo 8.2)}
\end{equation}%
Therefore, the proof is completed from (\ref{st-limit q}), (\ref{(teo 8.1)})
and (\ref{(teo 8.2)}).
\end{proof}

Notice that $A$-statistical approximation result includes the classical ones
by choosing as the identity matrix $I.$

\textbf{Supporting/Supporting Organizations:} No grants were received from
any public, private or non-profit organizations for this research.

\textbf{Author's contributions:} All authors contributed equally to the
writing of this paper. All authors read and approved the final manuscript.

\textbf{Availability of data and materials:} Not applicable.

\end{document}